\newtheorem{theorem}{Theorem}
\newtheorem{lemma}[theorem]{Lemma}
\newtheorem{corollary}[theorem]{Corollary}
\newtheorem*{problemP}{Problem $\mathbf{(P)}$}
\newtheorem{definition}[theorem]{Definition}
\newtheorem{remark}[theorem]{Remark}
\theoremstyle{definition}
\newtheorem{example}[theorem]{Example}
\begin{document}

\title{Higher-order variational problems of Herglotz type\thanks{This 
is a preprint of a paper whose final and definite form will appear 
in \emph{Vietnam Journal of Mathematics}, ISSN: 2305-221X (Print) 2305-2228 (Online). 
Paper submitted 13-June-2013; revised  11-Sept-2013; accepted for publication 24-Sept-2013.}}

\author{Sim\~{a}o P. S. Santos\\ {\tt simsantos@gmail.com}
\and Nat\'{a}lia Martins\\  {\tt natalia@ua.pt}
\and Delfim F. M. Torres\\ {\tt delfim@ua.pt}}

\date{CIDMA--Center for Research and Development in Mathematics and Applications\\
Department of Mathematics, University of Aveiro, 3810-193 Aveiro, Portugal}

\maketitle


\begin{abstract}
We obtain a generalized Euler--Lagrange differential equation and transversality optimality conditions
for Herglotz-type higher-order variational problems. Illustrative examples of the new results are given.

\bigskip

\noindent {\bf Keywords}: Euler--Lagrange differential equations,
natural boundary conditions, generalized calculus of variations.

\bigskip

\noindent {\bf Mathematics Subject Classification 2010}: 34H05, 49K15.
\end{abstract}


\section{Introduction}

The generalized variational calculus proposed by Herglotz
\cite{Guenther1996,Herglotz1930} deals with an initial value problem
\begin{gather}
\label{eq_Herg}
\dot{z}(t)=L\left(t,x(t),\dot{x}(t),z(t)\right), \quad t \in [a,b],\\
z(a)=\gamma, \quad \gamma \in \mathbb{R}, \label{eq_initialvalue_Herg}
\end{gather}
and consists in determining trajectories $x$ that extremize (minimize or maximize) the value $z(b)$.
Observe that \eqref{eq_Herg} represents a family of differential equations: for each function $x$
a different differential equation arises. Therefore, $z$ depends on $x$, a fact that
can be made explicit by writing $z(t, x(t), \dot{x}(t))$ or $z[x;t]$,
but for brevity and convenience of notation it is usual to write simply $z(t)$.
The problem reduces to the classical fundamental problem
of the calculus of variations (see, e.g., \cite{GelfandFomin}) if the Lagrangian $L$
does not depend on the variable $z$: if
\begin{gather*}
\dot{z}(t)=L(t,x(t),\dot{x}(t)), \quad t \in [a,b],\\
z(a)=\gamma, \quad \gamma \in \mathbb{R},
\end{gather*}
then we obtain the classical variational problem
\begin{equation}
\label{eq:class:Funct}
z(b)=\int_a^b \tilde{L}(t,x(t),\dot{x}(t))dt \longrightarrow \textrm{extr},
\end{equation}
where
$$
\tilde{L}(t,x,\dot{x})=L(t,x,\dot{x})+\frac{\gamma}{b-a}.
$$
Herglotz proved that a necessary condition for a trajectory $x$
to be an extremizer of the generalized variational problem $z(b) \rightarrow \textrm{extr}$
subject to \eqref{eq_Herg}--\eqref{eq_initialvalue_Herg} is given by
\begin{equation}
\label{eq:gen:EL:eq}
\frac{\partial L}{\partial x}\left(t,x(t),\dot{x}(t),z(t)\right)
-\frac{d}{dt}\frac{\partial L}{\partial \dot{x}}\left(t,x(t),\dot{x}(t),z(t)\right)
+\frac{\partial L}{\partial z}\left(t,x(t),\dot{x}(t),z(t)\right)
\frac{\partial L}{\partial \dot{x}}\left(t,x(t),\dot{x}(t),z(t)\right) = 0,
\end{equation}
$t \in [a,b]$. This equation is known as the generalized
Euler--Lagrange equation \cite{MR1391230,Guenther1996,MR1738100}.
Observe that for the classical problem of the calculus of variations \eqref{eq:class:Funct}
one has $\frac{\partial L}{\partial z}=0$, and the differential equation \eqref{eq:gen:EL:eq}
reduces to the classical Euler--Lagrange equation:
$$
\frac{\partial L}{\partial x}\left(t,x(t),\dot{x}(t)\right)
-\frac{d}{dt}\frac{\partial L}{\partial \dot{x}}\left(t,x(t),\dot{x}(t)\right) =0.
$$
The variational problem of Herglotz was the basis of the Ph.D. thesis \cite{Georgieva2001}.
The main goal of this thesis, done under supervision of Ronald B. Guenther,
was to generalize the well known Noether's theorems (see, e.g., \cite{MR2098297})
to problems of Herglotz type \cite{Georgieva2002,Georgieva2005,Georgieva2003}.
As reported in \cite{Georgieva2002,Georgieva2005}, unlike the classical variational principle,
the variational principle of Herglotz gives a variational description of nonconservative processes,
even when the Lagrangian is autonomous. For the importance to include nonconservativism
in the calculus of variations, we refer the reader to the recent book \cite{book:FCV}.

In this paper we generalize Herglotz's problem by considering
the following higher-order variational problem.

\begin{problemP}
Determine the trajectories $x\in C^{2n}([a,b], \mathbb{R})$
that extremize the value of the functional $z[x;b]$,
$$
z(b) \longrightarrow \textrm{extr},
$$
where $z$ satisfies the differential equation
\begin{equation}
\label{eq_main}
\dot{z}(t)=L\left(t,x(t),\dot{x}(t),\ldots,x^{(n)}(t),z(t)\right), \quad t \in [a,b],
\end{equation}
subject to the initial condition
\begin{equation}
\label{eq_initialvalue}
z(a)=\gamma,
\end{equation}
where $\gamma$ is a fixed real number.
The Lagrangian $L$ is assumed to satisfy the following hypotheses:
\begin{description}
\item[$(H1)$] $L$ is a $C^1(\mathbb{R}^{n+3}, \mathbb{R})$ function;
\item[$(H2)$] functions  $\displaystyle t \mapsto \frac{\partial L}{\partial x^{(j)}}\left(t,
x(t), \dot{x}(t), \ldots, x^{(n)}(t), z(t)\right)$  and $\displaystyle t \mapsto
\frac{\partial L}{\partial z}\left(t, x(t), \dot{x}(t), \ldots, x^{(n)}(t), z(t)\right)$,
$j=0, \ldots, n$, are differentiable up to order $n$ for any admissible trajectory $x$.
\end{description}
\end{problemP}

Clearly, problem $(P)$ generalizes the classical higher-order variational problem:
if the Lagrangian $L$ is independent of $z$, then
\begin{gather*}
\dot{z}(t)=L\left(t,x(t),\dot{x}(t),\ldots, x^{(n)}(t)\right), \quad t \in [a,b],\\
z(a)=\gamma, \quad \gamma \in \mathbb{R},
\end{gather*}
which implies that the problem under consideration is
$$
z(b)=\int_a^b \tilde{L}\left(t,x(t),\dot{x}(t),\ldots,x^{(n)}(t)\right)dt
\longrightarrow \textrm{extr},
$$
where
$$
\tilde{L}\left(t,x,\dot{x},\ldots,x^{(n)}\right)
=L\left(t,x,\dot{x},\ldots,x^{(n)}\right)+\frac{\gamma}{b-a}.
$$

The paper is organized as follows. In Section~\ref{sec:prelim}
we recall the necessary results from the classical calculus of variations.
Our results are then proved in Section~\ref{sec:mr}:
in Section~\ref{section3.1_E-L} we obtain the generalized
Euler--Lagrange equation for problem $(P)$
in the class of functions $x \in C^{2n}([a,b],\mathbb{R})$
satisfying given boundary conditions
\begin{equation}
\label{main_constrains}
\begin{array}{c}
x(a)=\alpha_0,  \, \ldots, \, x^{(n-1)}(a)=\alpha_{n-1},\\
x(b)=\beta_0,  \, \ldots, \, x^{(n-1)}(b)=\beta_{n-1},
\end{array}
\end{equation}
where $\alpha_0$, $\ldots$, $\alpha_{n-1}$, and
$\beta_0$, \ldots, $\beta_{n-1}$ are given real numbers.
The transversality conditions (or natural boundary conditions)
for problem $(P)$ are obtained in Section~\ref{section3.2_ NBC}.
We end with Section~\ref{section4_ex}, presenting some illustrative
examples of application of the new results.

The results of the paper are trivially generalized for the case of vector functions
$x: [a,b] \rightarrow \mathbb{R}^m$, $m \in \mathbb{N}$, but for simplicity of presentation
we restrict ourselves to the scalar case. Along the text, we use the standard conventions
$x^{(0)} = \frac{d^0 x}{dt^0} = x$ and $\sum_{k=1}^{j}\Upsilon(k)=0$ whenever $j=0$.


\section{Preliminary results}
\label{sec:prelim}

We recall some results of the classical calculus of variations
that are useful in the sequel.

\begin{definition}
We say that $\eta \in C^{2n}\left([a,b], \mathbb{R}\right)$
is an admissible variation for problem $(P)$ subject to boundary conditions
\eqref{main_constrains} if, and only if,
$\eta(a)=\eta(b)=  \cdots=  \eta^{(n-1)}(a)=\eta^{(n-1)}(b)=0$.
\end{definition}

\begin{lemma}[Higher-order fundamental lemma of the calculus of variations -- cf. \cite{MartinsTorres2009}]
\label{FlemmaCV}
Let $f_0$, $\ldots$, $f_n \in C([a,b], \mathbb{R})$. If
$$
\int_{a}^{b}
\left(\sum_{i=0}^{n}f_i(t) \eta^{(i)}(t) \right) dt=0
$$
for all admissible variations $\eta$ of problem $(P)$
subject to boundary conditions \eqref{main_constrains}, then
$$
\sum_{i=0}^{n} (-1)^i f_i^{(i)}(t) =0,
$$
$t \in [a,b]$.
\end{lemma}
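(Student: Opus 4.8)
The plan is to reduce the given higher-order integral identity to the classical (first-order) fundamental lemma of the calculus of variations by means of repeated integration by parts, the whole point being that every boundary term produced along the way is forced to vanish by the admissibility conditions imposed on $\eta$. (Writing $f_i^{(i)}$ in the conclusion presupposes that each $f_i$ is of class $C^i$; I take this differentiability as implicit in the statement, since otherwise the conclusion would be meaningless.)

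First I would fix an arbitrary admissible variation $\eta$, so that $\eta^{(j)}(a)=\eta^{(j)}(b)=0$ for $j=0,\ldots,n-1$, and treat each summand $\int_a^b f_i(t)\,\eta^{(i)}(t)\,dt$ separately. For $i\geq 1$, integrating by parts $i$ successive times transfers all derivatives off $\eta$ and onto $f_i$, yielding the identity
\begin{equation*}
\int_a^b f_i(t)\,\eta^{(i)}(t)\,dt
=\sum_{k=0}^{i-1}(-1)^k\Big[f_i^{(k)}(t)\,\eta^{(i-1-k)}(t)\Big]_a^b
+(-1)^i\int_a^b f_i^{(i)}(t)\,\eta(t)\,dt,
\end{equation*}
which can itself be established by a short induction on $i$.

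The key observation is that in each boundary term the derivative of $\eta$ appearing is $\eta^{(i-1-k)}$ with $0\leq i-1-k\leq i-1\leq n-1$; hence, by the definition of admissible variation, every such term vanishes at both $t=a$ and $t=b$. Consequently $\int_a^b f_i(t)\,\eta^{(i)}(t)\,dt=(-1)^i\int_a^b f_i^{(i)}(t)\,\eta(t)\,dt$ for each $i$ (trivially true for $i=0$), and summing over $i=0,\ldots,n$ collapses the hypothesis to
\begin{equation*}
\int_a^b\left(\sum_{i=0}^{n}(-1)^i f_i^{(i)}(t)\right)\eta(t)\,dt=0,
\end{equation*}
valid for every admissible variation $\eta$.

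Finally I would invoke the classical fundamental lemma of the calculus of variations (the special case $n=0$): the function $g:=\sum_{i=0}^{n}(-1)^i f_i^{(i)}$ is continuous, and the class of admissible variations is rich enough — it contains, for instance, all smooth functions compactly supported in $(a,b)$ — to force $g\equiv 0$ on $[a,b]$, which is exactly the desired conclusion. I expect the only delicate point to be the bookkeeping that guarantees the vanishing of the boundary terms: one must check that repeated integration by parts never raises the order of the derivative of $\eta$ above $n-1$, so that the admissibility conditions $\eta^{(j)}(a)=\eta^{(j)}(b)=0$, $j=0,\ldots,n-1$, are always exactly what is needed.
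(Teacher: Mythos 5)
Your proof is correct. Note that the paper itself does not prove this lemma at all: it is recalled as a preliminary result with a pointer to \cite{MartinsTorres2009}, where a time-scale (nabla-derivative) version is established. Your argument --- repeated integration by parts via the identity $\int_a^b f_i\,\eta^{(i)}\,dt=\sum_{k=0}^{i-1}(-1)^k\bigl[f_i^{(k)}\eta^{(i-1-k)}\bigr]_a^b+(-1)^i\int_a^b f_i^{(i)}\,\eta\,dt$ (which is exactly Lemma~\ref{Integration:byparts:higherorder} of the paper), vanishing of all boundary terms because $0\le i-1-k\le n-1$ and admissible variations satisfy $\eta^{(j)}(a)=\eta^{(j)}(b)=0$ for $j=0,\dots,n-1$, followed by the classical fundamental lemma applied to $g=\sum_{i=0}^n(-1)^if_i^{(i)}$ --- is the standard proof of this statement and is sound. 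You are also right to flag the differentiability issue: as stated, the lemma assumes only $f_i\in C([a,b],\mathbb{R})$, so the conclusion $\sum_{i=0}^n(-1)^if_i^{(i)}=0$ is meaningful only under the implicit assumption $f_i\in C^i$, which your integration by parts requires; a du Bois--Reymond--type argument (pushing antiderivatives onto the $f_i$ instead of derivatives) would avoid this assumption but is not needed here, since in the paper's only application the functions $f_j=\lambda\,\frac{\partial L}{\partial x^{(j)}}\langle x,z\rangle_n$ are differentiable up to order $n$ by hypothesis $(H2)$. One small point of care in your final step: the classical fundamental lemma gives $g=0$ on the open interval $(a,b)$ from compactly supported variations, and continuity of $g$ extends this to $[a,b]$ --- worth stating explicitly, though it is implicit in what you wrote.
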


\begin{lemma}[Higher-order integration by parts formulas -- cf. \cite{MartinsTorres2012}]
\label{Integration:byparts:higherorder}
Let $n \in \mathbb{N}$, $a,b \in \mathbb{R}$, $a<b$, and $f,g \in C^n\left([a,b], \mathbb{R}\right)$.
The following $n$ equalities hold:
$$
\int^b_af(t)g^{(i)}(t)dt=\left[ \sum^{i-1}_{k=0}(-1)^kf^{(k)}(t)g^{(i-1-k)}(t)\right]^b_a
+ (-1)^i\int^b_af^{(i)}(t)g(t)dt,
$$
$i=1,\ldots,n$.
\end{lemma}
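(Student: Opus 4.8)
The plan is to argue by induction on $i$, using the elementary (first-order) integration by parts formula together with a reindexing of the boundary sum. The first-order rule $\int_a^b f(t)g'(t)\,dt = [f(t)g(t)]_a^b - \int_a^b f'(t)g(t)\,dt$, valid for $f,g \in C^1$, serves both as the base case $i=1$ and as the basic tool in the induction step; note that for $i=1$ the claimed sum $\sum_{k=0}^{0}(-1)^k f^{(k)} g^{(-k)}$ collapses to the single term $fg$, so the asserted identity is precisely the first-order rule.

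For the induction step, I would assume the formula holds for some $i$ with $1 \le i \le n-1$ and consider $\int_a^b f(t) g^{(i+1)}(t)\,dt$. First I would write $g^{(i+1)} = (g^{(i)})'$ and apply the first-order formula to get
$$\int_a^b f(t) g^{(i+1)}(t)\,dt = \left[f(t) g^{(i)}(t)\right]_a^b - \int_a^b f'(t) g^{(i)}(t)\,dt.$$
Then I would apply the induction hypothesis to the remaining integral with $f'$ in the role of $f$; since $(f')^{(k)} = f^{(k+1)}$, this gives
$$\int_a^b f'(t) g^{(i)}(t)\,dt = \left[\sum_{k=0}^{i-1}(-1)^k f^{(k+1)}(t) g^{(i-1-k)}(t)\right]_a^b + (-1)^i \int_a^b f^{(i+1)}(t) g(t)\,dt.$$
The integral terms combine at once into $(-1)^{i+1}\int_a^b f^{(i+1)} g\,dt$, which is exactly the remainder required for index $i+1$.

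The one point requiring care — the step I expect to be the main (though still routine) obstacle — is verifying that the boundary terms reassemble into the stated sum. After substituting, the boundary contribution equals $[f g^{(i)}]_a^b$ minus the bracketed sum above. Shifting the index via $j=k+1$ converts $-\sum_{k=0}^{i-1}(-1)^k f^{(k+1)} g^{(i-1-k)}$ into $\sum_{j=1}^{i}(-1)^j f^{(j)} g^{(i-j)}$, and absorbing the isolated term $f g^{(i)}$ as the $j=0$ summand yields $\sum_{j=0}^{i}(-1)^j f^{(j)} g^{(i-j)}$. This is precisely $\sum_{k=0}^{(i+1)-1}(-1)^k f^{(k)} g^{((i+1)-1-k)}$, closing the induction step and thereby establishing all $n$ equalities. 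Throughout, the hypothesis $f,g \in C^n$ guarantees that every derivative occurring up to order $n$ exists and is continuous, so each application of the first-order formula is legitimate.
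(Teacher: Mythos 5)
Your proof is correct. Note that the paper itself states Lemma~\ref{Integration:byparts:higherorder} without proof, merely citing \cite{MartinsTorres2012}, so there is no in-paper argument to compare against; your induction on $i$ --- using the first-order integration by parts rule both as the base case and as the engine of the inductive step, with the index shift $j=k+1$ to fold the boundary terms back into the stated sum --- is the standard derivation of this identity, and your smoothness bookkeeping (each application needs only derivatives of order at most $i+1\le n$) is also in order.
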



\section{Main results}
\label{sec:mr}

For simplicity of notation, we introduce the operator
$\langle \cdot, \cdot \rangle_n$, $n \in \mathbb{N}$, defined by
$$
\langle x, z \rangle_n (t):= \left(t,x(t),\dot{x}(t), \ldots, x^{(n)}(t),z(t)\right).
$$


\subsection{Generalized Euler--Lagrange equation}
\label{section3.1_E-L}

The following result gives a necessary condition of Euler--Lagrange type for an admissible
function $x$ to be an extremizer of the functional $z[x;b]$, where $z$
is defined by \eqref{eq_main}, \eqref{eq_initialvalue} and \eqref{main_constrains}.

\begin{theorem}[Generalized higher-order Euler--Lagrange equation]
\label{main_Thm}
If $x$ is a solution to problem $(P)$ subject to the boundary conditions \eqref{main_constrains},
then $x$ satisfies the generalized Euler--Lagrange equation
\begin{equation}
\label{H-O_E-L_equations}
\sum^{n}_{j=0}(-1)^j \frac{d^j}{dt^j} \left(\lambda(t)
\frac{\partial L}{\partial x^{(j)}} \langle x, z \rangle_n (t) \right)=0,
\end{equation}
$t \in [a,b]$, where $\displaystyle \lambda(t):=\displaystyle e^{-\int^t_a\frac{\partial L}{\partial z}
\langle x, z \rangle_n(\theta)d\theta}$.
\end{theorem}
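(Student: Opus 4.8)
The plan is to derive the generalized Euler–Lagrange equation via the standard variational technique: introduce a one-parameter family of admissible trajectories, differentiate $z(b)$ with respect to the parameter, set the derivative to zero, and reduce the resulting integral identity to a pointwise condition using Lemma~\ref{FlemmaCV}. The key difficulty — and what distinguishes this from the classical case — is that $z$ itself depends on $x$ through the differential equation \eqref{eq_main}, so the variation of $z(b)$ is not simply an integral of the variation of $L$; instead it is governed by a linear ODE that I must solve explicitly.

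Let me set this up concretely. Let $x$ be a solution and let $\eta$ be an arbitrary admissible variation, so $\eta^{(i)}(a)=\eta^{(i)}(b)=0$ for $i=0,\ldots,n-1$. Consider the family $x+\varepsilon\eta$ and write $\zeta(t):=\frac{d}{d\varepsilon}z[x+\varepsilon\eta;t]\big|_{\varepsilon=0}$. Differentiating \eqref{eq_main} under the integral/derivative and using $z(a)=\gamma$ (so $\zeta(a)=0$) gives the linear first-order ODE
$$
\dot{\zeta}(t)=\sum_{j=0}^{n}\frac{\partial L}{\partial x^{(j)}}\langle x,z\rangle_n(t)\,\eta^{(j)}(t)+\frac{\partial L}{\partial z}\langle x,z\rangle_n(t)\,\zeta(t),\qquad \zeta(a)=0.
$$
This is the crux. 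I would solve it by the integrating-factor method using precisely $\lambda(t)=e^{-\int_a^t\frac{\partial L}{\partial z}\langle x,z\rangle_n(\theta)\,d\theta}$, which satisfies $\dot\lambda=-\lambda\frac{\partial L}{\partial z}\langle x,z\rangle_n$. Multiplying through by $\lambda$ turns the left side into $\frac{d}{dt}(\lambda\zeta)$, and integrating from $a$ to $b$ with $\zeta(a)=0$ and $\lambda(a)=1$ yields
$$
\lambda(b)\,\zeta(b)=\int_a^b \lambda(t)\sum_{j=0}^{n}\frac{\partial L}{\partial x^{(j)}}\langle x,z\rangle_n(t)\,\eta^{(j)}(t)\,dt.
$$

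Since $x$ extremizes $z[x;b]$, the first variation vanishes: $\zeta(b)=0$ for every admissible $\eta$. As $\lambda(b)\neq 0$, the integral above is zero for all admissible $\eta$. Setting $f_j(t):=\lambda(t)\frac{\partial L}{\partial x^{(j)}}\langle x,z\rangle_n(t)$ — which are continuous by hypothesis $(H1)$, and the required differentiability is guaranteed by $(H2)$ together with smoothness of $\lambda$ — I am exactly in the situation of the higher-order fundamental Lemma~\ref{FlemmaCV}. That lemma immediately delivers
$$
\sum_{j=0}^{n}(-1)^j\frac{d^j}{dt^j}\!\left(\lambda(t)\frac{\partial L}{\partial x^{(j)}}\langle x,z\rangle_n(t)\right)=0,
$$
which is \eqref{H-O_E-L_equations}.

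**The main obstacle I expect** is the rigorous justification of the linearized ODE for $\zeta$: interchanging $\frac{d}{d\varepsilon}$ with $\frac{d}{dt}$ and with the chain rule applied to $L\langle x+\varepsilon\eta,z_\varepsilon\rangle_n$, where $z_\varepsilon$ is itself defined implicitly by the perturbed equation \eqref{eq_main}. This requires differentiability of the flow with respect to the parameter $\varepsilon$, which follows from the $C^1$ regularity in $(H1)$ and standard smooth-dependence-on-parameters theory for ODEs, but it is the step where care is needed; everything afterward (the integrating factor, the single integration, and the appeal to the fundamental lemma) is essentially mechanical. Notably, this derivation never needs Lemma~\ref{Integration:byparts:higherorder} directly — integration by parts is already absorbed into the statement of Lemma~\ref{FlemmaCV} — so that second preliminary lemma is presumably reserved for the transversality conditions in the next subsection.
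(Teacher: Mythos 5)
Your proposal is correct and follows essentially the same route as the paper's own proof: the same variation $\zeta(t)=\frac{d}{d\epsilon}z[x+\epsilon\eta;t]\big\vert_{\epsilon=0}$, the same linear ODE solved with the integrating factor $\lambda$, the boundary facts $\zeta(a)=\zeta(b)=0$, and the same final appeal to Lemma~\ref{FlemmaCV}. Your closing observation is also accurate: the paper reserves Lemma~\ref{Integration:byparts:higherorder} for the natural boundary conditions in Theorem~\ref{Thm_Nat_Bound_Cond}.
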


\begin{proof}
Suppose that $x$ is a solution of $(P)$ subject to \eqref{main_constrains},
and let $\eta \in C^{2n}([a,b], \mathbb{R})$ be an admissible variation
such that $\eta^{(n)}(a)=0$. Let $\epsilon$ be an arbitrary real number.
Define $\zeta:[a,b] \rightarrow \mathbb{R}$ by
$$
\zeta(t):=\frac{d}{d\epsilon}z[x+\epsilon \eta; t]\biggm\vert_{\epsilon=0}
=\frac{d}{d\epsilon}z\left(t, x(t)+\epsilon\eta(t), \dot{x}(t)
+\epsilon\dot{\eta}(t),\ldots,x^{(n)}(t)+\epsilon\eta^{(n)}(t)\right)\biggm\vert_{\epsilon=0}.
$$
Obviously, $\zeta(a)=0$. Since $x$ is a minimizer (resp., maximizer), we have
\begin{equation*}
z\left(b, x(b) +\epsilon \eta(b), \dot x(b) + \epsilon \dot\eta(b), \ldots,
x^{(n)}(b) + \epsilon\eta^{(n)}(b)\right)
\geq (\text{resp. } \leq) \, z\left(b, x(b), \dot x(b), \ldots, x^{(n)}(b)\right).
\end{equation*}
Hence, $\zeta(b)=\left.\frac{d}{d\epsilon} z[ x+\epsilon\eta;b]\right|_{\epsilon=0}=0$.
Because
\begin{equation*}
\begin{split}
\dot{\zeta}(t)&=\frac{d}{dt}\frac{d}{d\epsilon}z\left(t, x(t)+\epsilon\eta(t),
\dot{x}(t)+\epsilon\dot{\eta}(t),\ldots,x^{(n)}(t)+\epsilon\eta^{(n)}(t)\right)\biggm\vert_{\epsilon=0}\\
&=\frac{d}{d\epsilon}\frac{d}{dt}z\left(t, x(t)+\epsilon\eta(t), \dot{x}(t)
+\epsilon\dot{\eta}(t),\ldots,x^{(n)}(t)+\epsilon\eta^{(n)}(t)\right)\biggm\vert_{\epsilon=0}\\
&=\frac{d}{d\epsilon} L\langle x+\epsilon \eta, z  \rangle_n (t)\biggm\vert_{\epsilon=0},
\end{split}
\end{equation*}
we conclude that
\begin{equation*}
\begin{split}
\dot{\zeta}(t)&= \sum_{k=0}^{n} \left(
\frac{\partial L}{\partial x^{(k)}}\langle x, z  \rangle_n (t)\eta^{(k)}(t)\right)
+\frac{\partial L}{\partial z}\langle x, z \rangle_n(t)
\frac{d}{d\epsilon}z[x+\epsilon \eta;t] \biggm\vert_{\epsilon=0}\\
&= \sum_{k=0}^{n} \left(
\frac{\partial L}{\partial x^{(k)}}\langle x, z  \rangle_n (t)\eta^{(k)}(t)\right)
+\frac{\partial L}{\partial z}\langle x, z  \rangle_n (t)\zeta(t).
\end{split}
\end{equation*}
Thus, $\zeta$ satisfies a first order linear differential equation
whose solution is found according to
$$
\dot{y}+Py=Q \Leftrightarrow e^{-\int_a^t P(\theta) d\theta}y(t)-y(a)
=\int_a^t{e^{-{\int_a^{\tau}{P(\theta)}d\theta}}Q(\tau) d\tau}.
$$
Therefore,
\begin{equation*}
e^{-\int^t_a{\frac{\partial L}{\partial z}}\langle x, z  \rangle_n (\theta) d\theta}\zeta(t) - \zeta(a)
=\int^t_a e^{-\int^{\tau}_a{\frac{\partial L}{\partial z}\langle x, z  \rangle_n (\theta)}d\theta}\left(
\sum_{j = 0}^{n}\frac{\partial L}{\partial x^{(j)}}\langle x, z  \rangle_n (\tau)\, \eta^{(j)}(\tau)\right)d\tau.
\end{equation*}
Denoting $\lambda(t):=e^{ -\int^t_a\frac{\partial L}{\partial z}\langle x, z  \rangle_n (\theta)d\theta}$, we get
$$
\lambda(t)\zeta(t)-\zeta(a)
=\int^t_a \lambda(\tau) \left(\sum^n_{j=0}\frac{\partial L}{\partial
x^{(j)}}\langle x, z  \rangle_n (\tau)\eta^{(j)}(\tau)\right) d\tau.
$$
In particular, for $t=b$, we have
$$
\lambda(b)\zeta(b)-\zeta(a)=\int^b_a \lambda(\tau) \left(\sum^n_{j=0}\frac{\partial
L}{\partial x^{(j)}}\langle x, z  \rangle_n (\tau)\eta^{(j)}(\tau)\right) d\tau.
$$
Since $\zeta(t)=0$, $t \in \{a,b\}$, the left-hand side of the previous equation vanishes and we get
$$
0=\int^b_a\sum^n_{j=0}\lambda(\tau)\frac{\partial L}{\partial
x^{(j)}}\langle x, z  \rangle_n (\tau)\eta^{(j)}(\tau)d\tau.
$$
Using the higher-order fundamental lemma of the calculus of variations
(Lemma~\ref{FlemmaCV}), we obtain the generalized Euler--Lagrange equation
$$
\sum^{n}_{j=0}(-1)^j\left(\lambda(t) \frac{\partial L}{\partial x^{(j)}}\langle x, z  \rangle_n (t)\right)^{(j)}=0,
$$
$t \in [a,b]$, proving the intended result.
\end{proof}

In order to simplify expressions, and in agreement with
Theorem~\ref{main_Thm}, from now on we use the notation
$\lambda(t):=e^{-\int^t_a\frac{\partial L}{\partial z}\langle x, z  \rangle_n (\theta)d\theta}$.
If $n=1$, then the differential equation of problem $(P)$ reduces to
$\dot{z}(t)=L\left(t,x(t),\dot{x}(t),z(t)\right)$,
which defines the functional $z$ of Herglotz's variational principle.
This principle is a particular case of our Theorem~\ref{main_Thm}
and is given in Corollary~\ref{cor:HP}.

\begin{corollary}[See \cite{Guenther1996,Herglotz1930}]
\label{cor:HP}
Let $z$ be a solution of $\dot{z}(t)=L\left(t,x(t),\dot{x}(t),z(t)\right)$,
$t\in [a,b]$, subject to the boundary conditions $z(a)=\gamma$,
$x(a)=\alpha$ and  $x(b)=\beta$,
where $\gamma$, $\alpha$, and $\beta$ are given real numbers.
If $x$ is an extremizer of functional $z[x;b]$, then $x$ satisfies the differential equation
$$
\frac{\partial L}{\partial x}\left(t, x(t),\dot{x}(t), z(t)\right)
+\frac{\partial L}{\partial z}\left(t, x(t),\dot{x}(t), z(t)\right)
\frac{\partial L}{\partial \dot{x}}\left(t, x(t),\dot{x}(t), z(t)\right)
-\frac{d}{dt}\frac{\partial L}{\partial \dot{x}}\left(t, x(t),\dot{x}(t), z(t)\right) = 0,
$$
$t\in [a,b]$.
\end{corollary}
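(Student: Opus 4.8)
The plan is to derive this as the first-order specialization of Theorem~\ref{main_Thm}, observing that problem $(P)$ with $n=1$ is exactly the Herglotz setting stated in the corollary (the differential equation, the initial condition $z(a)=\gamma$, and the boundary data $x(a)=\alpha$, $x(b)=\beta$ all match the case $n=1$). First I would write the generalized Euler--Lagrange equation \eqref{H-O_E-L_equations} for $n=1$. The sum then collapses to the two terms $j=0$ and $j=1$, giving
$$
\lambda(t)\frac{\partial L}{\partial x}\langle x, z\rangle_1(t)
-\frac{d}{dt}\left(\lambda(t)\frac{\partial L}{\partial \dot{x}}\langle x, z\rangle_1(t)\right)=0,
$$
where $\lambda(t)=e^{-\int_a^t\frac{\partial L}{\partial z}\langle x, z\rangle_1(\theta)\,d\theta}$.

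Next I would expand the $j=1$ term by the product rule, splitting $\frac{d}{dt}\bigl(\lambda\,\partial L/\partial\dot x\bigr)$ into $\dot\lambda\,\partial L/\partial\dot x+\lambda\,\frac{d}{dt}\bigl(\partial L/\partial\dot x\bigr)$. The one computation that matters is $\dot\lambda$: differentiating the exponential by the chain rule and applying the fundamental theorem of calculus to the integral in the exponent gives $\dot\lambda(t)=-\frac{\partial L}{\partial z}\langle x, z\rangle_1(t)\,\lambda(t)$. Substituting this identity and factoring out $\lambda(t)$ yields
$$
\lambda(t)\left[\frac{\partial L}{\partial x}+\frac{\partial L}{\partial z}\frac{\partial L}{\partial\dot x}-\frac{d}{dt}\frac{\partial L}{\partial\dot x}\right]=0,
$$
where every partial derivative is understood to be evaluated along $\langle x, z\rangle_1(t)$.

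Finally I would divide through by $\lambda(t)$, which is legitimate since $\lambda$ is an exponential and hence strictly positive (in particular nonzero) on all of $[a,b]$, needing no regularity beyond the continuity guaranteed by $(H1)$. What remains is precisely the asserted first-order generalized Euler--Lagrange equation. I do not expect a genuine obstacle here: the only points requiring care are the bookkeeping in the Leibniz expansion and, above all, the sign in $\dot\lambda=-(\partial L/\partial z)\lambda$, which is exactly what turns the $-\dot\lambda\,\partial L/\partial\dot x$ contribution into the $+\,(\partial L/\partial z)(\partial L/\partial\dot x)$ term appearing in the statement. The reordering of the three terms in the corollary (with $\partial L/\partial x$ written first and the total-derivative term last) is purely cosmetic and does not affect the identity.
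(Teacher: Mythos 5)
Your proposal is correct and follows exactly the paper's intended route: the paper presents Corollary~\ref{cor:HP} as the $n=1$ specialization of Theorem~\ref{main_Thm}, which is precisely what you do, with the product-rule expansion of $\frac{d}{dt}\bigl(\lambda(t)\frac{\partial L}{\partial \dot{x}}\bigr)$, the identity $\dot{\lambda}=-\frac{\partial L}{\partial z}\lambda$, and division by the strictly positive $\lambda$ supplying the details the paper leaves implicit.
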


Our Euler--Lagrange equation \eqref{H-O_E-L_equations} is also a generalization
of the classical Euler--Lagrange equation for higher-order variational problems.

\begin{corollary}[See, e.g., \cite{GelfandFomin}]
Suppose that $x$ is a solution of problem $(P)$ subject to \eqref{main_constrains},
and that the Lagrangian $L$ is independent of $z$.
Then $x$ satisfies the classical higher-order Euler--Lagrange differential equation
\begin{equation}\label{ClassicalE-L}
\sum^{n}_{j=0}(-1)^j  \frac{d^j}{dt^j} \left(
\frac{\partial L}{\partial x^{(j)}}\left(t, x(t),
\ldots, x^{(n)}(t)\right)\right) =0,
\end{equation}
$t\in [a,b]$.
\end{corollary}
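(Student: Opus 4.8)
The plan is to obtain this as an immediate specialization of Theorem~\ref{main_Thm}, so the bulk of the work has already been done. Since $x$ is a solution of problem $(P)$ subject to \eqref{main_constrains}, Theorem~\ref{main_Thm} guarantees that $x$ satisfies the generalized Euler--Lagrange equation \eqref{H-O_E-L_equations}. The entire argument then reduces to a single observation about the weight function $\lambda$, together with the remark that the partial derivatives of $L$ inherit the independence of $L$ from $z$.

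First I would evaluate $\lambda$ under the standing hypothesis that $L$ is independent of $z$. In that case $\frac{\partial L}{\partial z} \equiv 0$, and therefore
$$
\lambda(t) = e^{-\int_a^t \frac{\partial L}{\partial z}\langle x, z\rangle_n(\theta)\,d\theta} = e^{0} = 1
$$
for every $t \in [a,b]$; in particular $\lambda$ is the constant function $1$. Substituting $\lambda \equiv 1$ into \eqref{H-O_E-L_equations} removes the factor $\lambda(t)$ from each summand and leaves
$$
\sum_{j=0}^{n} (-1)^j \frac{d^j}{dt^j}\left(\frac{\partial L}{\partial x^{(j)}}\langle x, z\rangle_n(t)\right) = 0 .
$$
Finally, because $L$ does not depend on $z$, its partial derivatives $\frac{\partial L}{\partial x^{(j)}}$ do not depend on $z$ either, so the argument $\langle x, z\rangle_n(t)$ may be replaced by $\left(t, x(t), \ldots, x^{(n)}(t)\right)$; the displayed equation is then precisely the classical higher-order Euler--Lagrange equation \eqref{ClassicalE-L}.

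I do not expect any genuine obstacle here: the only nontrivial point is the collapse $\lambda \equiv 1$, which is a one-line consequence of $\frac{\partial L}{\partial z} = 0$, and the remaining steps are purely notational, consisting of dropping the (now trivial) weight and the (now absent) dependence on $z$ in the arguments of the partial derivatives.
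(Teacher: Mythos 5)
Your proposal is correct and follows exactly the argument intended by the paper, which states this corollary without an explicit proof precisely because it is the immediate specialization you describe: $\frac{\partial L}{\partial z} \equiv 0$ forces $\lambda \equiv 1$ in \eqref{H-O_E-L_equations}, and the remaining simplification is purely notational. Nothing is missing.
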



\subsection{Generalized natural boundary conditions}
\label{section3.2_ NBC}

We now consider the case when the values of
$x(a)$, $\ldots$, $x^{(n-1)}(a)$,
$x(b)$, $\ldots$, $x^{(n-1)}(b)$,
are not necessarily specified.

\begin{theorem}[Generalized natural boundary conditions]
\label{Thm_Nat_Bound_Cond}
Suppose that $x$ is a solution to problem $(P)$. Then
$x$ satisfies the generalized Euler--Lagrange equation \eqref{H-O_E-L_equations}. Moreover,
\begin{enumerate}

\item If $x^{(k)}(b)$, $k \in \{0,\ldots, n-1\}$, is free, then the natural boundary condition
\begin{equation}
\label{NBC(b)}
\sum_{j=1}^{n-k}(-1)^{j-1} \frac{d^{j-1}}{dt^{j-1}} \left(
\lambda(t)\frac{\partial L}{\partial x^{(k+j)}}\langle x,
z  \rangle_n (t)\right)\Bigg\vert_{t=b}=0
\end{equation}
holds.

\item If $x^{(k)}(a)$,
$k \in \{0,\ldots, n-1\}$, is free, then the natural boundary condition
\begin{equation}
\label{NBC(a)}
\sum_{j=1}^{n-k}(-1)^{j-1} \frac{d^{j-1}}{dt^{j-1}}
\left( \lambda(t)\frac{\partial L}{\partial
x^{(k+j)}}\langle x, z  \rangle_n (t)\right)\Bigg\vert_{t=a}=0
\end{equation}
holds.
\end{enumerate}
\end{theorem}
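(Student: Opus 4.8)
The plan is to re-run the first-variation argument from the proof of Theorem~\ref{main_Thm}, but now permitting variations $\eta$ that need not vanish (together with their derivatives) at the endpoints whose boundary data are free, and then to keep track of the boundary terms that were previously annihilated. First I would note that the generalized Euler--Lagrange equation \eqref{H-O_E-L_equations} comes for free: the admissible variations of Theorem~\ref{main_Thm} (those with $\eta^{(i)}(a)=\eta^{(i)}(b)=0$ for $i=0,\ldots,n-1$) form a subclass of the variations allowed here, so an extremizer of the present problem is in particular an extremizer against that subclass, and Theorem~\ref{main_Thm} applies verbatim. Next, repeating the computation of $\zeta$ and $\dot\zeta$ with a general $\eta\in C^{2n}([a,b],\mathbb{R})$, the two facts $\zeta(a)=0$ (the initial value $z(a)=\gamma$ does not depend on $\epsilon$) and $\zeta(b)=0$ (the first-order necessary condition at an extremizer) still hold. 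Since their derivation used only these two endpoint values, I again obtain, now for \emph{every} $\eta\in C^{2n}([a,b],\mathbb{R})$,
$$
0=\int_a^b \lambda(\tau)\left(\sum_{j=0}^{n}\frac{\partial L}{\partial x^{(j)}}\langle x,z\rangle_n(\tau)\,\eta^{(j)}(\tau)\right)d\tau .
$$

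The key step is then to integrate by parts while retaining the boundary contributions. Writing $\psi_j(t):=\lambda(t)\frac{\partial L}{\partial x^{(j)}}\langle x,z\rangle_n(t)$ and applying Lemma~\ref{Integration:byparts:higherorder} to each summand with $j\geq 1$, all derivatives are transferred off $\eta$, yielding
$$
0=\int_a^b\left(\sum_{j=0}^{n}(-1)^j\psi_j^{(j)}(t)\right)\eta(t)\,dt
+\left[\,\sum_{j=1}^{n}\sum_{k=0}^{j-1}(-1)^k\psi_j^{(k)}(t)\,\eta^{(j-1-k)}(t)\right]_a^b .
$$
By the Euler--Lagrange equation \eqref{H-O_E-L_equations} the integrand of the first term vanishes identically, so only the boundary bracket survives.

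The main work, and the step most prone to bookkeeping errors, is rearranging this double sum according to the order of the derivative that falls on $\eta$. Calling that order $k$ forces the inner index to be $j-1-k$, which requires $j\geq k+1$; collecting terms and substituting $j\mapsto k+j$ shows that the coefficient of $\eta^{(k)}(b)$ is precisely $\sum_{j=1}^{n-k}(-1)^{j-1}\psi_{k+j}^{(j-1)}(b)$, i.e.\ exactly the left-hand side of \eqref{NBC(b)}, while the coefficient of $\eta^{(k)}(a)$ is minus the left-hand side of \eqref{NBC(a)}, for $k=0,\ldots,n-1$. Finally, for each index $k$ whose boundary value $x^{(k)}(b)$ (respectively $x^{(k)}(a)$) is free, I would choose a variation $\eta\in C^{2n}$ making $\eta^{(k)}(b)$ (respectively $\eta^{(k)}(a)$) nonzero while all other boundary derivatives vanish; such $\eta$ exist by a standard construction. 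The identity above then forces the corresponding coefficient to be zero, which is exactly \eqref{NBC(b)} (respectively \eqref{NBC(a)}). I expect the reindexing of the boundary bracket, with its interplay of signs and derivative orders, to be the only delicate point; the isolation of a single free boundary derivative is routine.
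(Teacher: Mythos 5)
Your overall architecture is the same as the paper's: obtain the integral identity from the first variation, integrate by parts via Lemma~\ref{Integration:byparts:higherorder}, kill the integral term with the Euler--Lagrange equation \eqref{H-O_E-L_equations}, re-index the surviving boundary bracket, and isolate one free boundary derivative at a time; your re-indexing (the coefficient of $\eta^{(k)}(b)$ being the left-hand side of \eqref{NBC(b)}, that of $\eta^{(k)}(a)$ being minus the left-hand side of \eqref{NBC(a)}) is correct. The genuine flaw is the assertion that $\zeta(b)=0$ --- and hence the identity $0=\int_a^b \lambda(\tau)\bigl(\sum_{j=0}^{n}\frac{\partial L}{\partial x^{(j)}}\langle x,z\rangle_n(\tau)\,\eta^{(j)}(\tau)\bigr)d\tau$ --- holds for \emph{every} $\eta\in C^{2n}([a,b],\mathbb{R})$. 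The theorem is meant to cover (and the paper applies it to) the mixed situation in which some of the $2n$ values in \eqref{main_constrains} are prescribed and only the remaining ones are free; in the paper's last example only $\dot x(1)$ is free. Extremality of $x$ yields $\frac{d}{d\epsilon}z[x+\epsilon\eta;b]\big\vert_{\epsilon=0}=0$ only for those $\eta$ such that $x+\epsilon\eta$ is still admissible, i.e., $\eta^{(j)}$ must vanish at each endpoint where $x^{(j)}$ is prescribed; for any other $\eta$ the curves $x+\epsilon\eta$ are not competitors, and extremality gives no information about $\epsilon\mapsto z[x+\epsilon\eta;b]$.

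This is not merely a missing justification: the claim is false and would prove too much, since combined with your own bracket analysis it would force the natural boundary conditions at \emph{all} $2n$ endpoints, prescribed ones included. Already the classical case contradicts this: take $n=1$, $L=\dot x^2$ (independent of $z$, so $\lambda\equiv 1$), with $x(a)=0$ and $x(b)=1$ both prescribed; the minimizer $x(t)=(t-a)/(b-a)$ has $\lambda\,\frac{\partial L}{\partial \dot x}=2\dot x\equiv 2/(b-a)\neq 0$ at $t=b$, violating \eqref{NBC(b)}. The repair is exactly the paper's bookkeeping: fix the free index $k$ under consideration and assert the integral identity only for variations $\eta$ whose derivatives up to order $n-1$ vanish at every prescribed datum and at all free data other than $x^{(k)}(b)$ (respectively $x^{(k)}(a)$); such $\eta$ keep $x+\epsilon\eta$ admissible, so $\zeta(a)=\zeta(b)=0$ legitimately holds for them. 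Since the variation you construct in your final step is precisely of this restricted type, the rest of your argument goes through verbatim once the quantifier is corrected.
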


\begin{proof}
Suppose that $x$ is a solution to problem $(P)$. Let $\eta \in C^{2n}([a,b], \mathbb{R})$
and define the function $\zeta$ just like in the proof of Theorem~\ref{main_Thm}.
From the arbitrariness of $\eta$, and using similar arguments as the ones
in the proof of Theorem~\ref{main_Thm}, we conclude that $x$ satisfies the
generalized Euler--Lagrange equation \eqref{H-O_E-L_equations}.
We now prove \eqref{NBC(b)} (the proof of \eqref{NBC(a)} follows exactly the
same arguments). Suppose that $x^{(k)}(b)$, $k \in \{0,\ldots,n-1\}$, is free.
Define the function
$\zeta(t):=\left.\frac{d}{d\epsilon}z[x+\epsilon \eta; t]\right\vert_{\epsilon=0}$.
Let $J : = \left\{ j \in \{0, \ldots, n-1\} : x^{(j)}(a) \text{ is given}\right\}$.
For any $j \in \{0, \ldots, n-1\}$, if $j \in J$, then $\eta^{(j)}(a)=0$;
otherwise, we restrict ourselves to those functions $\eta$ such that $\eta^{(j)}(a)=0$.
For convenience, we also suppose that $\eta^{(n)}(a)=0$. Using the same arguments
as the ones used in the proof of Theorem~\ref{main_Thm},
we find that $\zeta$ satisfies the first order linear differential equation
$$
\dot{\zeta}(t) = \frac{\partial L}{\partial x}\langle x, z  \rangle_n (t)\eta(t)
+\frac{\partial L}{\partial \dot{x}}\langle x, z  \rangle_n (t)\dot{\eta}(t)
+\cdots+\frac{\partial L}{\partial x^{(n)}}\langle x, z  \rangle_n (t)\eta^{(n)}(t)
+\frac{\partial L}{\partial z}\langle x, z  \rangle_n (t)\zeta(t),
$$
whose solution is found by
$$
\lambda(t)\zeta(t)-\zeta(a)=\int^t_a\sum^n_{j=0}\lambda(\tau)\frac{\partial L}{\partial
x^{(j)}}\langle x, z  \rangle_n (\tau)\eta^{(j)}(\tau)d\tau.
$$
Again, since $\zeta(t)=0$, for $t \in \{a,b\}$, we get
$$
\int^b_a\sum^n_{j=0}\lambda(\tau)\frac{\partial L}{\partial x^{(j)}}\langle x,
z  \rangle_n (\tau)\eta^{(j)}(\tau)d\tau=0
$$
and, therefore,
$$
\int^b_a\lambda(\tau)\frac{\partial L}{\partial x}\langle x,
z  \rangle_n (\tau)\eta(\tau)d\tau + \sum^n_{j=1}\int^b_a\lambda(\tau)
\frac{\partial L}{\partial x^{(j)}}\langle x,
z  \rangle_n (\tau)\eta^{(j)}(\tau)d\tau=0.
$$
Using the higher-order integration by parts formula
(Lemma~\ref{Integration:byparts:higherorder}) in the second parcel we get
\begin{multline*}
\int^b_a\lambda(\tau)\frac{\partial L}{\partial x}\langle x, z  \rangle_n (\tau)\eta(\tau)d\tau\\
+ \sum^{n}_{j=1}\left( \left[\lambda(\tau)
\frac{\partial L}{\partial x^{(j)}}\langle x, z  \rangle_n (\tau)\eta^{(j-1)}(\tau)
+ \sum^{j-1}_{i=1}(-1)^i\left(\lambda(\tau)
\frac{\partial L}{\partial x^{(j)}}\langle x, z
\rangle_n (\tau)\right)^{(i)}\eta^{(j-1-i)}(\tau)\right]^b_a\right.\\
\left.+(-1)^j \int^b_a \left(\lambda(\tau) \frac{\partial L}{\partial x^{(j)}}\langle
x, z  \rangle_n (\tau)\right)^{(j)}\eta(\tau) d\tau \right) =0,
\end{multline*}
which is equivalent to
\begin{multline*}
\int^b_a \sum_{j=0}^{n}(-1)^j  \left(\lambda(\tau)
\frac{\partial L}{\partial x^{(j)}}\langle x, z
\rangle_n (\tau)\right)^{(j)}\eta(\tau) d\tau\\
+ \sum^{n}_{j=1} \left[\lambda(\tau) \frac{\partial L}{\partial x^{(j)}}\langle x,
z  \rangle_n (\tau)\eta^{(j-1)}(\tau) + \sum^{j-1}_{i=1}(-1)^i\left(\lambda(\tau)
\frac{\partial L}{\partial x^{(j)}}\langle x, z
\rangle_n (\tau)\right)^{(i)}\eta^{(j-1-i)}(\tau)\right]^b_a=0.
\end{multline*}
Using the generalized Euler--Lagrange equation
\eqref{H-O_E-L_equations} into the last equation we get
\begin{equation*}
\sum^{n}_{j=1} \left[\lambda(\tau) \frac{\partial L}{\partial
x^{(j)}}\langle x, z \rangle_n (\tau)\eta^{(j-1)}(\tau)
+ \sum^{j-1}_{i=1}(-1)^i\left(\lambda(\tau)
\frac{\partial L}{\partial x^{(j)}}\langle x,
z \rangle_n (\tau)\right)^{(i)}\eta^{(j-1-i)}(\tau)\right]^b_a=0
\end{equation*}
and since $\eta(a)=\dot{\eta}(a)=\cdots=\eta^{(n-1)}(a)=0$, we conclude that
\begin{equation*}
\sum^{n}_{j=1} \left(\lambda(\tau) \frac{\partial L}{\partial
x^{(j)}}\langle x, z \rangle_n (\tau)\eta^{(j-1)}(\tau)
+\sum^{j-1}_{i=1}(-1)^i\left(\lambda(\tau) \frac{\partial L}{\partial x^{(j)}}\langle x,
z \rangle_n (\tau)\right)^{(i)}\eta^{(j-1-i)}(\tau)\right)\Bigg\vert_{\tau=b} =0.
\end{equation*}
This equation is equivalent to
$$
\sum_{i=0}^{n-1}\left( \sum_{j=1}^{n-i}(-1)^{j-1}\left( \lambda(\tau)\frac{\partial L}{\partial
x^{(i+j)}}\langle x, z  \rangle_n (\tau)\right)^{(j-1)}\eta^{(i)}(\tau)\right)\Bigg\vert_{\tau=b}=0.
$$
Let $I : = \left\{ i \in \{0, \ldots, n-1\} : x^{(i)}(b) \text{ is given}\right\}$.
Note that $k \not \in I$. For any $i \in \{0, \ldots, n-1\}$, if $i \in I$, then $\eta^{(i)}(b)=0$;
otherwise, for $i \ne k$, we restrict ourselves to those functions $\eta$ such that $\eta^{(i)}(b)=0$.
From the arbitrariness of $\eta^{(k)}(b)$, it follows that
$$
\sum_{j=1}^{n-k}(-1)^{j-1}\left( \lambda(\tau)\frac{\partial L}{\partial
x^{(k+j)}}\langle x, z  \rangle_n (\tau)\right)^{(j-1)}\Bigg\vert_{\tau=b}=0.
$$
This concludes the proof.
\end{proof}

\begin{remark}
If $x$ is a solution to problem $(P)$ without
any of the $2n$ boundary conditions \eqref{main_constrains},
then $x$ satisfies the generalized higher-order Euler--Lagrange equation \eqref{H-O_E-L_equations}
and $n$ transversality conditions \eqref{NBC(b)} and $n$ transversality conditions \eqref{NBC(a)}.
In general, for each boundary condition missing in \eqref{main_constrains}, there is a corresponding
natural boundary condition, as given by Theorem~\ref{Thm_Nat_Bound_Cond}.
\end{remark}

Next we remark that our generalized transversality conditions \eqref{NBC(b)} and \eqref{NBC(a)}
are generalizations of the classical transversality conditions for higher-order variational problems
(cf. $\psi^k = 0$, $k = 0,\ldots, n-1$, with $\psi^k$ given as in \cite[Section~5]{MR2098297}).

\begin{corollary}
Suppose that $x$ is a solution of problem $(P)$ with $L$ independent of $z$. Then
$x$ satisfies the classical Euler--Lagrange equation \eqref{ClassicalE-L}. Moreover,
\begin{enumerate}

\item If $x^{(k)}(b)$, $k \in \{0,\ldots, n-1\}$, is free, then
the natural boundary condition
\begin{equation*}
\sum_{j=1}^{n-k}(-1)^{j-1} \frac{d^{j-1}}{dt^{j-1}}
\left(\frac{\partial L}{\partial x^{(k+j)}}\right)\left(b, \dot{x}(b), \ldots, x^{(n)}(b)\right)=0
\end{equation*}
holds.

\item If $x^{(k)}(a)$ is free, $k \in \{0,\ldots, n-1\}$,
then the natural boundary condition
\begin{equation*}
\sum_{j=1}^{n-k}(-1)^{j-1} \frac{d^{j-1}}{dt^{j-1}}
\left(\frac{\partial L}{\partial x^{(k+j)}}\right)\left(a, \dot{x}(a), \ldots, x^{(n)}(a)\right)=0
\end{equation*}
holds.

\end{enumerate}
\end{corollary}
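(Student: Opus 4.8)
The plan is to obtain this corollary as a direct specialization of Theorem~\ref{Thm_Nat_Bound_Cond} (which itself reproduces the Euler--Lagrange statement of Theorem~\ref{main_Thm}) to the situation in which the Lagrangian carries no explicit dependence on $z$. The single observation that drives everything is the behaviour of the exponential weight $\lambda$ in this degenerate case, so I would begin there.

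First I would note that if $L$ does not depend on $z$, then $\frac{\partial L}{\partial z}\langle x, z \rangle_n(\theta)=0$ for every admissible $x$ and every $\theta\in[a,b]$. Consequently,
$$
\lambda(t)=e^{-\int_a^t \frac{\partial L}{\partial z}\langle x, z \rangle_n(\theta)\,d\theta}=e^{0}=1,
\qquad t\in[a,b],
$$
so the weight is identically equal to $1$ and, in particular, $\frac{d^{i}}{dt^{i}}\lambda(t)=0$ for all $i\ge 1$. At the same time, since $L$ is independent of $z$, the augmented argument $\langle x, z \rangle_n(t)$ may be replaced throughout by $\left(t,x(t),\dot{x}(t),\ldots,x^{(n)}(t)\right)$.

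Next I would substitute $\lambda\equiv 1$ into the conclusions of Theorem~\ref{Thm_Nat_Bound_Cond}. Inserting it into the generalized Euler--Lagrange equation \eqref{H-O_E-L_equations} removes the factor $\lambda(t)$ from each summand and yields exactly the classical higher-order Euler--Lagrange equation \eqref{ClassicalE-L}. Likewise, substituting $\lambda\equiv 1$ into the transversality conditions \eqref{NBC(b)} and \eqref{NBC(a)} collapses each operator $\frac{d^{j-1}}{dt^{j-1}}\bigl(\lambda(t)\,\partial L/\partial x^{(k+j)}\bigr)$ into $\frac{d^{j-1}}{dt^{j-1}}\bigl(\partial L/\partial x^{(k+j)}\bigr)$, producing the two stated natural boundary conditions at $t=b$ and $t=a$, respectively.

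There is essentially no genuine obstacle here: the corollary is a bookkeeping specialization rather than a new argument. The only point that deserves a line of justification—and which I would flag explicitly—is verifying, via the product rule applied to $\lambda(t)\,\partial L/\partial x^{(k+j)}$, that once every derivative of the constant $\lambda\equiv 1$ vanishes only the $\partial L/\partial x^{(k+j)}$ term survives, so that no spurious contributions remain in the boundary sums. Granting that elementary check, both assertions follow at once from Theorem~\ref{Thm_Nat_Bound_Cond}.
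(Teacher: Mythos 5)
Your proposal is correct and is precisely the argument the paper intends: the corollary is stated as an immediate specialization of Theorem~\ref{Thm_Nat_Bound_Cond}, where $L$ independent of $z$ gives $\frac{\partial L}{\partial z}\equiv 0$, hence $\lambda\equiv 1$, and substituting into \eqref{H-O_E-L_equations}, \eqref{NBC(b)}, and \eqref{NBC(a)} yields the classical equations. Nothing is missing; the product-rule check you flag is exactly the routine verification the paper leaves implicit.
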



\section{Illustrative examples}
\label{section4_ex}

We illustrate the usefulness of our results with some examples
that are not covered by previous available results in the literature.
Let us consider the particular case of Theorem~\ref{main_Thm} with $n=2$.

\begin{corollary}
\label{cor:n2}
Let $z$ be a solution of $\dot{z}(t)=L\left(t,x(t),\dot{x}(t),\ddot{x}(t),z(t)\right)$,
$t\in [a,b]$, subject to the boundary conditions $z(a)=\gamma$,
$x(a)=\alpha_0$, $\dot{x}(a)=\alpha_1$,
$x(b)=\beta_0$, and $\dot{x}(b)=\beta_1$,
where $\gamma$, $\alpha_0$, $\alpha_1$, $\beta_0$,
and $\beta_1$, are given real numbers.
If $x$ is an extremizer of functional $z[x;b]$, then $x$ satisfies
the differential equation
\begin{multline}
\label{eq:EL2}
\frac{\partial L}{\partial x}\langle x, z \rangle_2(t)
+\frac{\partial L}{\partial z}\langle x, z \rangle_2(t)
\frac{\partial L}{\partial \dot{x}}\langle x, z  \rangle_2(t)
-\frac{d}{dt}\frac{\partial L}{\partial \dot{x}}\langle x, z \rangle_2 (t)
+ \left( \frac{\partial L}{\partial z}\langle x, z \rangle_2(t)\right)^2
\frac{\partial L}{\partial \ddot{x}}\langle x, z  \rangle_2 (t)\\
-2\frac{\partial L}{\partial z}\langle x, z  \rangle_2 (t)
\frac{d}{dt}\frac{\partial L}{\partial \ddot{x}}\langle x, z \rangle_2(t)
- \left(\frac{d}{dt}\frac{\partial L}{\partial z}\langle x, z \rangle_2(t)\right)
\frac{\partial L}{\partial \ddot{x}}\langle x, z \rangle_2 (t)
+ \frac{d^2}{dt^2}\frac{\partial L}{\partial \ddot{x}}\langle x, z \rangle_{2}(t) = 0,
\end{multline}
$t\in [a,b]$, where $\langle x, z \rangle_{2}(t)
= \left(t,x(t),\dot{x}(t),\ddot{x}(t),z(t)\right)$.
\end{corollary}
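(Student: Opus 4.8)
The plan is to specialize the generalized higher-order Euler--Lagrange equation \eqref{H-O_E-L_equations} of Theorem~\ref{main_Thm} to the case $n=2$ and then expand the indicated total derivatives explicitly. Since $x$ is an extremizer of $z[x;b]$ subject to the four boundary conditions $x(a)=\alpha_0$, $\dot x(a)=\alpha_1$, $x(b)=\beta_0$, $\dot x(b)=\beta_1$, these coincide with \eqref{main_constrains} for $n=2$, so Theorem~\ref{main_Thm} applies directly and gives, after writing the sum over $j\in\{0,1,2\}$ out,
\begin{equation*}
\lambda(t)\frac{\partial L}{\partial x}\langle x,z\rangle_2(t)
-\frac{d}{dt}\left(\lambda(t)\frac{\partial L}{\partial \dot{x}}\langle x,z\rangle_2(t)\right)
+\frac{d^2}{dt^2}\left(\lambda(t)\frac{\partial L}{\partial \ddot{x}}\langle x,z\rangle_2(t)\right)=0 .
\end{equation*}

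First I would record the single identity that drives every simplification: from the definition $\lambda(t)=e^{-\int_a^t \frac{\partial L}{\partial z}\langle x,z\rangle_2(\theta)\,d\theta}$, differentiation yields $\dot{\lambda}(t)=-\lambda(t)\frac{\partial L}{\partial z}\langle x,z\rangle_2(t)$. Next I would apply the product rule to the three terms above. The $j=0$ term needs no work. For the $j=1$ term, writing $B=\frac{\partial L}{\partial \dot x}$ one has $\frac{d}{dt}(\lambda B)=\dot\lambda B+\lambda\dot B$, and substituting $\dot\lambda=-\lambda\frac{\partial L}{\partial z}$ produces the pair $\lambda\frac{\partial L}{\partial z}\frac{\partial L}{\partial \dot x}-\lambda\frac{d}{dt}\frac{\partial L}{\partial \dot x}$. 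For the $j=2$ term, writing $C=\frac{\partial L}{\partial \ddot x}$, I would compute $\frac{d^2}{dt^2}(\lambda C)$ by applying the product rule twice, each time eliminating $\dot\lambda$ through the identity above; this generates the four terms carrying the factors $\left(\frac{\partial L}{\partial z}\right)^2$, $-\frac{d}{dt}\frac{\partial L}{\partial z}$, $-2\frac{\partial L}{\partial z}\frac{d}{dt}$, and $\frac{d^2}{dt^2}$ acting on $\frac{\partial L}{\partial \ddot x}$, each multiplied by $\lambda$.

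Finally, since $\lambda(t)$ is an exponential and hence never vanishes, I would divide the resulting sum through by $\lambda(t)$ and collect terms; the seven summands produced above are exactly, and in the same order, the seven summands of \eqref{eq:EL2}, which completes the proof. The only genuine obstacle is bookkeeping in the $j=2$ term: the second derivative $\frac{d^2}{dt^2}(\lambda C)$ produces both the $\left(\frac{\partial L}{\partial z}\right)^2$ summand (from differentiating the factor $\dot\lambda$ once more) and the $-\frac{d}{dt}\frac{\partial L}{\partial z}$ summand (from differentiating $\frac{\partial L}{\partial z}$ inside $\dot\lambda$), and these two contributions, together with the cross term $-2\frac{\partial L}{\partial z}\frac{d}{dt}\frac{\partial L}{\partial \ddot x}$, must be kept carefully distinct with their correct signs. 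No ideas beyond Theorem~\ref{main_Thm} and the Leibniz rule are required.
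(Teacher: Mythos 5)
Your proposal is correct and follows exactly the route the paper intends: the corollary is stated there as the $n=2$ particular case of Theorem~\ref{main_Thm}, and your computation—specializing \eqref{H-O_E-L_equations} to $n=2$, using $\dot{\lambda}(t)=-\lambda(t)\frac{\partial L}{\partial z}\langle x,z\rangle_2(t)$ to eliminate derivatives of $\lambda$, and dividing by the nonvanishing exponential $\lambda$—is precisely the expansion needed to pass from \eqref{H-O_E-L_equations} to \eqref{eq:EL2}.
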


We now apply Corollary~\ref{cor:n2} to concrete situations.

\begin{example}
Let us consider the following Herglotz's
higher-order variational problem:
\begin{equation}
\label{eq:prb:del}
\begin{gathered}
z(1) \longrightarrow \min, \\
\dot{z}(t) = \ddot{x}^2(t) + z^2(t), \quad t \in [0,1], \quad z(0) = \frac{1}{2},\\
x(0) = 0, \quad \dot{x}(0) = 1, \quad x(1) = 1, \quad \dot{x}(1) = 1.
\end{gathered}
\end{equation}
For this problem, the necessary optimality condition \eqref{eq:EL2} asserts that
\begin{equation}
\label{eq:EL:ex}
x^{(4)}(t) - 4 z(t) x^{(3)}(t) + \left(4 z^2(t) - 2 \dot{z}(t)\right) x^{(2)}(t) = 0.
\end{equation}
Solving the system formed by \eqref{eq:EL:ex} and $\dot{z}(t) = \ddot{x}^2(t) + z^2(t)$,
subject to the given boundary conditions, gives the extremal
$$
x(t) = t, \quad z(t) = \frac{1}{2-t},
$$
for which $z(1) = 1$.
\end{example}

\begin{example}
Consider problem \eqref{eq:prb:del} with $z(0) = z_0$ free.
We show that such problem is not well defined.
Indeed, if a solution exists, we obtain the optimality system
\begin{equation}
\label{eq:EL:ex:zf}
\begin{cases}
x^{(4)}(t) - 4 z(t) x^{(3)}(t) + \left(4 z^2(t) - 2 \dot{z}(t)\right) x^{(2)}(t) = 0\\
\dot{z}(t) = \ddot{x}^2(t) + z^2(t)
\end{cases}
\end{equation}
subject to $x(0) = 0$ and $\dot{x}(0) = x(1) = \dot{x}(1) = 1$. It follows that
$$
x(t) = t, \quad z(t) = \frac{z_0}{1-z_0t},
$$
and we conclude that the problem has no solution:
the infimum is $-\infty$ obtained when $z_0 \rightarrow 1^+$.
\end{example}

\begin{example}
Consider now the following problem:
\begin{equation}
\label{eq:prb2:del}
\begin{gathered}
z(1) \longrightarrow \min, \\
\dot{z}(t) = \ddot{x}^2(t) + z(t), \quad t \in [0,1], \quad z(0) = 1,\\
x(0) = 0, \quad \dot{x}(0) = 1, \quad x(1) = 1, \quad \dot{x}(1) = 0.
\end{gathered}
\end{equation}
For problem \eqref{eq:prb2:del}, the necessary optimality condition \eqref{eq:EL2} asserts that
\begin{equation}
\label{eq:EL:ex2}
x^{(4)}(t) - 2 x^{(3)}(t) + x^{(2)}(t) = 0.
\end{equation}
Solving the system formed by \eqref{eq:EL:ex2} and $\dot{z}(t) = \ddot{x}^2(t) + z(t)$,
subject to the given boundary conditions, gives the extremal
$$
x(t) = \frac{(1-t) e^{t+1} + (2t - 1) e^t + (e - 3) e t - e + 1}{e^2 - 3 e + 1},
$$
$$
z(t) =
\frac{\left[ (1 + t^{2}) e^{t+2} - 2 (2 t^{2}+t + 2) e^{t+1}
+ (4 t^{2} +4 t + 5) e^{t} +e^{4} -6\,e^{3}+ 10\,e^{2}-2\,e-4\right]e^{t}}{\left(e^{2}-3\,e+1\right)^{2}},
$$
for which $z(1) = \frac{\left(e^2 - e - 4\right) e}{e^2 - 3 e + 1} \gtrsim 7,78$.
\end{example}

Our last example shows the usefulness of Theorem~\ref{Thm_Nat_Bound_Cond}.

\begin{example}
We now consider problem \eqref{eq:prb2:del} with $\dot{x}(1)$ free.
In this case, solving
$$
\begin{cases}
x^{(4)}(t) - 2 x^{(3)}(t) + x^{(2)}(t) = 0\\
\dot{z}(t) = \ddot{x}^2(t) + z(t)
\end{cases}
$$
subject to the boundary conditions
$z(0) = 1$, $x(0) = 0$, $\dot{x}(0) = 1$, $x(1) = 1$,
and the natural boundary condition \eqref{NBC(b)} for $n = 2$ and $k = 1$, that
in the present situation simplifies to $\ddot{x}(1) = 0$,
gives the extremal
$$
x(t) = t, \quad z(t) = e^t,
$$
for which $\dot{x}(1) = 1$ and $z(1) = e \lesssim 2,72$.
\end{example}


\section*{Acknowledgments}

This work was supported by FEDER funds through
COMPETE--Operational Programme Factors of Competitiveness
(``Programa Operacional Factores de Competitividade'')
and by Portuguese funds through the {\it Center for Research and Development
in Mathematics and Applications} (University of Aveiro) and the Portuguese
Foundation for Science and Technology (``FCT--Funda\c{c}\~{a}o para a Ci\^{e}ncia e a Tecnologia''),
within project PEst-C/MAT/UI4106/2011 with COMPETE number FCOMP-01-0124-FEDER-022690.
Torres was also supported by the FCT project PTDC/EEI-AUT/1450/2012,
co-financed by FEDER under POFC-QREN with COMPETE reference FCOMP-01-0124-FEDER-028894.
The authors are grateful to two anonymous referees for their valuable
comments and helpful suggestions.



\end{document}